\begin{document}

\theoremstyle{plain}
\newtheorem{Lemma}{{ Lemma}}
\newtheorem{Conj}{{Гипотеза}}
\newtheorem{Theo}{{ Theorem}}
\newtheorem{Prob}{{ Problem}}
\newtheorem{Prop}{{Предложение}}

\renewcommand{\proofname}{Proof}

\begin{center}\begin{Large}
\textbf{Intersection of conjugate solvable subgroups in  symmetric groups}\\
Anton Baykalov\\
\end{Large}
\begin{small}
Novosibirsk State University\\
{\it  anton188@bk.ru}
\end{small}
\end{center}

\pagenumbering{arabic}
\begin{abstract}
It is shown that for a solvable subgroup  $G$ of an almost simple group $S$ which socle is isomorphic to $A_n$ $ (n\ge5)$ 
there are  $x,y,z,t \in S$ such that   $G \cap G^x \cap G^y \cap G^z \cap G^t =1.$
\medskip

\noindent{\bf Kay words}: symmetric group, solvable group, almost simple group. 

\end{abstract}

\section*{Introduction}

Assume that a finite group $S$ acts on a set $\Omega.$ An element $x \in \Omega$ is called a $S${\it-regular point} if
$|xS|=|S|$, i.e. if the stabilizer of
$x$
is trivial.
Define the action of the group $S$ on $\Omega^k$ by the rule
$$g: (i_1, \ldots,i_k)\mapsto(i_1g,\ldots,i_kg).$$
If $S$ acts faithfully and transitively on $\Omega$, then the minimal number $k$ such that the set $\Omega^k$ contains a
$S$-regular point is called the
{\it base size} of $S$ and is denoted by  $b(S).$
For a positive integer $m$ the number of $S$-regular orbits on $\Omega^m$ is denoted by $Reg(S,m)$ (this number equals 0 if $m < b(S)$).
If $G$ is a subgroup of $S$ and $S$ acts by the right multiplication on the set $\Omega$ of the right cosets
of $G$ then $S/G_S$ acts faithfully and transitively on $\Omega.$ (Here $G_S=\cap_{g \in S} G^g$ is the core of $G$.) In this case, we denote
 $b(S/G_S)$ and $Reg(S/G_S,m)$ by $b_G(S)$ and $Reg_G(S,m)$ respectively.
Thus $b_G(S)$ is the minimal number $k$ such that there exist  \\ $x_1,\ldots,x_k$ $\in S$ for which $G^{x_1}\cap \ldots \cap H^{x_k}=G_S.$

Consider Problem 17.41 from ``Kourovka notebook''\cite{kt}:
\begin{Prob}
Let $G$ be a solvable subgroup  of a finite group  $S$ and $S$ does not contain nontrivial normal solvable subgroups.
Are there always exist five subgroups conjugated with
$G$ such that their intersection is trivial?
\end{Prob}

The problem is reduced to the case when $S$ is almost simple in \cite{vd}. Specifically, it is proved  that if for each almost simple group $S$ and solvable subgroup  $G$ of $S$ condition $Reg_G(S,5)\ge
5$ holds then for each finite  non\-solvable group $S$  with trivial solvable radical and solvable subgroup  $G$ of $S$ condition $Reg_G(S,5)\ge
5$ holds.

We have proved the following 

\begin{Theo}
Let
$G$
be a solvable subgroup of an almost simple group
$S$
whose socle is
isomorphic to $A_n$, $n\ge5.$
Then $Reg_G(S,5)\ge 5.$ In particular $b_G(S)\le~5.$
\end{Theo}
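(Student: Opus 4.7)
The plan is to reduce to the case where $G$ is a maximal solvable subgroup of $S$, split into cases via the structure of $G$ as a permutation group on $\{1,\ldots,n\}$, and in each case estimate $Reg_G(S,5)$ by a fixed-point counting argument. The reduction works as follows: if $G \leq G'$ in $S$, the natural $S$-equivariant projection $\pi\colon S/G \to S/G'$ satisfies $Stab_S(x) \leq Stab_S(\pi(x))$, so any lift of a regular $5$-tuple on $S/G'$ is regular on $S/G$, and distinct regular $S$-orbits on $(S/G')^5$ have disjoint nonempty $S$-stable preimages in $(S/G)^5$. Hence $Reg_G(S,5) \geq Reg_{G'}(S,5)$, and we may replace $G$ by a maximal solvable overgroup. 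This forces $G$ to be either intransitive (inside $S_k \times S_{n-k}$), transitive imprimitive (inside $S_a \wr S_b$ with $n=ab$), or primitive on $\{1,\ldots,n\}$.

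For the primitive case, Huppert's theorem on primitive solvable permutation groups gives $n = p^d$ and $G \leq AGL_d(p)$, so $|G|$ is polynomially bounded in $n$ while $|S| \geq n!/2$. Writing $\Omega = S/G$, the inequality $|\{\text{non-regular tuples in }\Omega^5\}| \leq \sum |Fix_\Omega(s)|^5$, summed over prime-order elements $s \in S$, together with Liebeck--Saxl type bounds on fixed-point ratios, yields $Reg_G(S,5) \geq 5$ once $n$ exceeds an explicit threshold. In the intransitive and imprimitive cases the direct-product or wreath-product structure controls $|G|$ recursively in terms of maximal solvable subgroups of smaller symmetric groups (solvability of $G$ passes to the natural projections and to the base group and top group of the wreath structure), and the same fixed-point counting yields the bound for all but small $n$.

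The main obstacle is the finitely many small-degree exceptions, where asymptotic estimates are too weak and the exact bound of five regular orbits must be verified directly. These include the primitive affine groups of small degree $n = p^d$ (where $|AGL_d(p)|$ is large relative to $|S|$) and, for $n = 6$, the extra almost simple groups $PGL_2(9)$, $M_{10}$, and $P\Gamma L_2(9)$. This finite tail is treated by a case-by-case inspection of the maximal solvable subgroups of the relevant almost simple groups, likely with computer assistance, and is expected to be the most delicate step, since the asymptotic fixed-point argument offers no margin there and the count of five regular orbits has to be produced explicitly.
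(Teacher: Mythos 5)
Your reduction to maximal solvable subgroups and your treatment of the primitive case are sound and essentially match the paper: for primitive solvable $G$ one has $n=p^d$, $G\le AGL_d(p)$ is of polynomially bounded order, and the fixed-point-ratio machinery of Burness--Guralnick--Saxl (which the paper invokes as a black box to get $b_G(S_n)\le 3$ for $n\ge 11$, finishing small degrees by computer) does the job. The paper also needs, and supplies, a small lemma upgrading $b_G(S)\le 4$ to $Reg_G(S,5)\ge 5$ by exhibiting five regular tuples in distinct orbits; your probabilistic count addresses this in principle.

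The genuine gap is your claim that ``the same fixed-point counting yields the bound'' in the imprimitive and intransitive cases. The Liebeck--Saxl and Burness--Guralnick--Saxl fixed-point-ratio bounds you appeal to are theorems about \emph{primitive} actions of $S_n$ and $A_n$; they say nothing about $S_n$ acting on cosets of a solvable subgroup sitting inside $S_a\wr S_b$ or $S_{k_1}\times\cdots\times S_{k_l}$. In those configurations $|G|$ is no longer polynomial in $n$ (maximal solvable transitive subgroups have order roughly $c^n$ with $c\approx 2.88$), $G$ contains transpositions and other elements from small conjugacy classes, and the crude estimate $\sum_C |C\cap G|^5/|C|^4$ with $|C\cap G|\le|G|$ diverges; to make the count close you would have to produce new class-by-class upper bounds on $|C\cap G|$ for every conjugacy class $C$ of $S_n$ and every maximal solvable $G$ in wreath or product position, with constants sharp enough that the residual ``finite tail'' is actually checkable by machine. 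Nothing of the sort is in the literature, and this is precisely where the paper does something completely different: for imprimitive $G$ it intersects $G$ with its conjugates by powers of the $n$-cycle $(1,2,\ldots,n)$, uses the induced block systems to show the intersection is semiregular, identifies it explicitly as a cyclic group generated by a product of $k$ disjoint $(n/k)$-cycles, and kills it with one more hand-chosen transposition conjugate (together with a separate lemma that a semiregular subgroup meets some conjugate trivially); for intransitive $G$ it inducts on the number of orbits, treating orbits of size $2,3,4$ by explicit conjugators such as $(2,3)$, $(1,4)(2,5)$, $(1,5)(2,6)(3,7)$ that tie the small orbit into the large one. These combinatorial constructions work uniformly for all $n\ge 5$, with no asymptotic threshold and no large computer search, which is exactly what your sketch cannot currently guarantee.
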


The proof contains calculations in GAP system. Texts of programs can be found here:\\
{ \small https://goo.gl/rm9l3M}


\section{Preliminary results}

Our totation is standard.\\
By $A \rtimes B$ we denote the semidirect product of groups $A$ and $B$, where $B$ acts on $A$, by
 $A \wr B$  the permutation
 wreath product of groups $A$ and $B$, where $B \le Sym_n$.

If group $G$ acts on the a $\Omega=\Omega_1 \sqcup \Omega_2$ and $\Omega_i$ is an invariant subset,
then denote projection of $G$ on $\Omega_i$ by $G|_{\Omega_i}$. 

\begin{Lemma}\textup{ \cite[Lemma 13]{vd} }\label{EV}
Let $G$ be a finite group with trivial solvable radical, $M$ be a solvable subgroup of  $S_n$, and   $k$ be a natural number such that
for every solvable subgroup $T$ of  $G$ the following inequality holds:
 $Reg_T(G,k)\ge 5$.
 Then the solvable radical of $G \wr M$ is trivial and for every solvable subgroup $S$ of $G \wr M$ the
 following inequality holds: ${Reg_S(G \wr M,k) \ge 5.}$
\end{Lemma}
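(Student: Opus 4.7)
The plan is to dispatch the two assertions of the lemma in turn, beginning with the claim about the solvable radical.

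Let $R$ be the solvable radical of $G \wr M$. Since $R \cap G^n$ is a normal solvable subgroup of $G^n$, it is contained in the solvable radical of $G^n$, which is the $n$-fold direct product of the solvable radical of $G$ and therefore trivial by hypothesis. Hence $R \cap G^n = 1$. As $R$ and $G^n$ are both normal in $G \wr M$ with trivial intersection, they commute elementwise, so $R \le C_{G \wr M}(G^n)$. A direct calculation of the conjugation of an element $(h_1, \ldots, h_n; 1)$ by $(g_1, \ldots, g_n; \sigma)$ shows that any centralising element must satisfy $\sigma = 1$ and each $g_i \in Z(G)$; since $Z(G) = 1$ (as $G$ has trivial solvable radical), we conclude $R = 1$. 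It follows that for any solvable $S \le G \wr M$ the core $S_{G \wr M}$, being a normal solvable subgroup, is also trivial, and $Reg_S(G \wr M, k)$ genuinely counts $k$-tuples of conjugates of $S$ whose intersection is $1$.

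For the main inequality I would analyse $S$ through the natural projection $\pi: G \wr M \to M$. Write $\bar S = \pi(S)$ and $K = S \cap G^n$; both are solvable. Let $\Delta_1, \ldots, \Delta_r$ be the orbits of $\bar S$ on $\{1, \ldots, n\}$, and let $T_i \le G$ denote the image of $K$ under projection to the $i$-th coordinate. Each $T_i$ is solvable, so the hypothesis on $G$ gives $Reg_{T_i}(G, k) \ge 5$. The heart of the argument is to lift the regular witnesses supplied by this hypothesis from the individual factors up to $G \wr M$: within each $\bar S$-orbit $\Delta_j$ choose a representative $i_j$ and five inequivalent regular $k$-tuples for $T_{i_j}$; propagate these choices across $\Delta_j$ via the $\bar S$-action; glue across the different orbits; and finally compose with a conjugation by an element of $M$ chosen so that the $\bar S$-part of $S$ is also trivialised. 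The formal aim is a bound of the form $Reg_S(G \wr M, k) \ge \min_i Reg_{T_i}(G, k) \ge 5$.

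The main obstacle is bookkeeping. First, $K$ is only a subdirect and not a direct product of the $T_i$'s, so regular orbits on the factors do not combine naively; second, $\bar S$ permutes the blocks $\Delta_j$, forcing the choices on distinct blocks to be $\bar S$-equivariant. I would handle both by first reducing to the case $\bar S = 1$ via a sufficiently generic conjugation inside $M$ (so that it suffices to work within $G^n$), and then, within each $\bar S$-orbit, observing that replacing the direct product of the $T_i$'s by the subdirect subgroup $K$ only \emph{enlarges} its conjugacy orbits and thus cannot reduce the count of regular orbits below the per-factor bound of five. That is the step where I expect the real work to lie.
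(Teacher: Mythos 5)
The paper does not prove this statement at all: it is quoted verbatim from \cite[Lemma~13]{vd}, so there is no in-paper argument to compare yours against. Judged on its own terms, your attempt splits cleanly. The first half --- triviality of the solvable radical of $G \wr M$ --- is correct and complete: $R \cap G^n$ is normal and solvable in $G^n$, whose solvable radical is $(\mathrm{Rad}(G))^n = 1$; two normal subgroups with trivial intersection commute, so $R \le C_{G \wr M}(G^n)$; and the centralizer computation forces $\sigma = 1$ and $g_i \in Z(G) \le \mathrm{Rad}(G) = 1$. No complaints there.

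The second half, however, is a plan rather than a proof, and the one step you defer is the entire content of the lemma. Two concrete problems. First, the device you propose for the top group --- ``reducing to the case $\bar S = 1$ via a sufficiently generic conjugation inside $M$'' --- cannot work as stated: conjugation does not shrink $\bar S$, every conjugate $S^x$ with $x \in G^n$ has the \emph{same} image $\bar S$ in $M$, and since $M$ is only assumed solvable you cannot hope to intersect conjugates of $\bar S$ inside $M$ down to the identity (take $\bar S = M$ cyclic, or $\bar S$ normal in $M$). The actual mechanism has to be that suitably chosen conjugators in the base group $G^n$ force any element of $\bigcap_i S^{y_i}$ with nontrivial image in $M$ to satisfy equations in $G$ that have no solution; nothing in your sketch produces such conjugators. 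Second, even if you arrange $\bigcap_i S^{y_i} \cap G^n = 1$, you may only conclude that the intersection embeds in $M$ and is therefore solvable; since the intersection is not normal, triviality of the solvable radical of $G \wr M$ does not finish the job. Your remaining observations --- that $Reg_K(X,k) \ge Reg_L(X,k)$ for $K \le L$ with trivial cores, and that regular orbits of $G^n$ on a product of coset spaces multiply to give at least $5$ when $\bar S = 1$ --- are sound, but they only dispose of the easy case $S \le G^n$. The $\bar S$-equivariance constraint on the choices within each $\bar S$-orbit of coordinates, which you flag as ``bookkeeping,'' together with the elimination of elements having nontrivial $M$-part, is precisely where Vdovin's proof does its work, and it is absent here.
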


\begin{Lemma}\textup{ \cite[Theorem 1]{prim} }\label{pr}
Let $H$ be a maximal primitive subgroup of $G=S_n$ or $A_n$; and $H \ne A_n.$
Then  $b_H(G)\le 3 $ for $n\ge 11.$
\end{Lemma}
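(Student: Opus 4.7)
The condition $b_H(G) \le 3$ is equivalent to the existence of three conjugates of $H$ in $G$ with intersection equal to $H_G$. Here $H_G = 1$, since the only proper normal subgroups of $S_n$ and $A_n$ for $n \ge 5$ are $1$ and $A_n$, and neither is contained in a proper maximal subgroup $H \ne A_n$. So we must find three points of $\Omega = G/H$ whose pointwise $G$-stabiliser is trivial. The plan is to estimate the sum
\[
Q(G, H) := \sum_{g \in G \setminus \{1\}} \left( \frac{|g^G \cap H|}{|g^G|} \right)^{3}
\]
and show $Q(G, H) < 1$; since every nontrivial $g$ fixes exactly $|\Omega| \cdot |g^G \cap H|/|g^G|$ points of $\Omega$, the union bound then produces a base of size $3$.

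To bound $Q(G,H)$, I would invoke the O'Nan--Scott classification of maximal primitive subgroups of $S_n$ and $A_n$ and treat each type separately: affine ($H = AGL_d(p)$, $n = p^d$); product action ($H = K \wr S_k$ with $K$ primitive of degree $m$, $n = m^k$); simple diagonal; almost simple of primitive type. In each case one combines an upper bound on $|H|$ -- Bochert's estimate $|H| \le n! / \lfloor (n+1)/2 \rfloor!$ or the sharper Mar\'oti bound -- with uniform upper bounds on the fixed-point ratios $\phi(g) = |g^G \cap H|/|g^G|$. The latter are controlled for the affine and almost simple types by the work of Liebeck--Saxl and Guralnick--Magaard, and more directly for small-support elements of $H$ by a combinatorial analysis of fixed points in the natural action of $H$ on $\{1, \dots, n\}$.

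The hardest case will be the product action $H = K \wr S_k$ with small $k$, since $H$ then contains elements permuting blocks of $\Delta^k$ with relatively many fixed points on $\Omega$ (for example, block-swaps whose centraliser inside $H$ is large), and so the fixed-point ratio decays only slowly. Here I would reduce to the single-factor situation by applying Lemma~\ref{EV} to lift base-size information from $K$ to the wreath product $K \wr S_k$, landing in an already-treated case. Finally, for the smallest degrees ($n = 11, 12, \dots$), where the asymptotic estimates are the tightest to verify, the short list of maximal primitive subgroups of each such degree can be checked directly in GAP.
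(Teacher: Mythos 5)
The paper does not prove this statement at all: Lemma~\ref{pr} is imported verbatim from \cite[Theorem 1]{prim} (Burness--Guralnick--Saxl), so there is no internal proof to compare yours against. Your outline does correctly identify the strategy used in that reference: observe $H_G=1$, bound the probability that a random triple of points of $G/H$ fails to be a base by $Q(G,H)=\sum_{g\ne 1}\bigl(|g^G\cap H|/|g^G|\bigr)^3$, split according to the O'Nan--Scott type of the maximal primitive subgroup $H$, combine order bounds on $|H|$ (Bochert, Mar\'oti) with fixed-point-ratio bounds, and finish the small degrees by computer. The union-bound reduction from $Q(G,H)<1$ to $b_H(G)\le 3$ is also correct.

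However, as written this is a research plan rather than a proof: every quantitative step --- the verification that $Q(G,H)<1$ in each O'Nan--Scott case from some explicit degree onward, and the determination of exactly which small degrees require machine checking --- is deferred to results you name but do not apply, and those estimates are where essentially all of the work lies. Moreover, one concrete step would fail: Lemma~\ref{EV} cannot handle the product-action case $H=K\wr S_k\le S_{m^k}$. That lemma concerns the action of an imprimitive wreath product $G\wr M$ on the cosets of a \emph{solvable} subgroup, under the hypothesis that $G$ has trivial solvable radical; here the roles are reversed --- $H$ is the (generally nonsolvable) point stabilizer inside $S_n$, and the action of $S_n$ on $S_n/H$ is not of the form covered by that lemma --- so no information about $b_H(S_n)$ can be extracted from it. To make this rigorous you would have to carry out the fixed-point-ratio estimates of \cite{prim} in full, or simply cite that theorem, which is exactly what the paper does.
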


\begin{Lemma}\label{bs}
Let $H < G$ and $b_H(G) \le 4$. Then $Reg_H(G,5) \ge 5.$
\end{Lemma}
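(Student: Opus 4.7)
The plan is to leverage the given regular $4$-tuple and extend it by an arbitrary fifth coordinate. Write $\Omega = G/H$, $\bar G = G/H_G$, and $n=|\Omega|$; then $\bar G$ acts faithfully and transitively on $\Omega$, and the hypothesis $b_H(G)\le 4$ supplies a tuple $(\omega_1,\omega_2,\omega_3,\omega_4)\in\Omega^4$ whose pointwise $\bar G$-stabilizer is trivial.

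The main observation is that for every $\omega\in\Omega$, the stabilizer of $(\omega_1,\omega_2,\omega_3,\omega_4,\omega)$ sits inside that of the $4$-tuple, so the $5$-tuple is again regular. For $\omega\ne\omega'$, any $g\in\bar G$ carrying $(\omega_1,\omega_2,\omega_3,\omega_4,\omega)$ to $(\omega_1,\omega_2,\omega_3,\omega_4,\omega')$ must first fix $\omega_1,\ldots,\omega_4$ and hence be trivial, forcing $\omega=\omega'$. Thus these extensions yield $n$ pairwise non-equivalent regular orbits on $\Omega^5$, and $Reg_H(G,5)\ge n$. This settles the lemma whenever $n\ge 5$.

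It remains to handle the small range $n\in\{2,3,4\}$ (we have $n\ge 2$ because $H<G$). Here I would set the hypothesis aside and argue directly: any $5$-tuple in $\Omega^5$ whose coordinates exhaust $\Omega$ is automatically regular by faithfulness of the $\bar G$-action, and the number of such surjective tuples equals $n!\cdot S(5,n)$, where $S(5,n)$ denotes the Stirling number of the second kind. Since $|\bar G|\le n!$, each regular orbit has size at most $n!$, so at least $S(5,n)$ orbits contain a surjective tuple. The values $S(5,2)=15$, $S(5,3)=25$, $S(5,4)=10$ all comfortably exceed $5$, finishing the argument. No serious obstacle arises: the main step is a one-line observation and the residual small-$n$ cases fall to a crude counting bound.
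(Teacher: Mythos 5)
Your proof is correct, but it follows a genuinely different route from the paper's. The paper fixes $b=b_H(G)\in\{2,3,4\}$, takes a regular $b$-tuple with pairwise distinct entries, and writes down five explicit $5$-tuples obtained by padding and permuting those entries; regularity is inherited from the $b$-tuple, and the five tuples lie in distinct orbits because the pattern of coordinate equalities is an orbit invariant and the five patterns are pairwise different. This works uniformly for all $|\Omega|\ge b$ but requires a small case analysis on $b$ and five explicit tuples per case. You instead freeze a regular $4$-tuple and vary only the fifth coordinate, which immediately gives $|\Omega|$ distinct regular orbits (your argument that $g$ must fix the first four coordinates and hence be trivial is exactly right), at the cost of needing $|\Omega|\ge 5$; you then dispose of $|\Omega|\in\{2,3,4\}$ by counting surjective tuples, noting that any tuple whose entries exhaust $\Omega$ is regular by faithfulness and that the $n!\,S(5,n)$ such tuples split into at least $S(5,n)\ge 10$ orbits since each orbit has size at most $|\bar G|\le n!$. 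Both arguments are complete; yours avoids the threefold case split and even shows the stronger bound $Reg_H(G,5)\ge\min(|\Omega|,10)\ge 5$, while the paper's is shorter once one accepts the "easy to see" verification of the listed tuples and needs no separate treatment of small $|\Omega|$.
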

\begin{proof}
If $b_G(S) \le 2$ and $(\omega_1,\omega_2$) is a regular point of $\Omega^2$ (and also $\omega_1 \ne \omega_2$),
then it is easy to see that points
\begin{gather*}
(\omega_1,\omega_1,\omega_1, \omega_1,\omega_2);\\
(\omega_1,\omega_1,\omega_1, \omega_2,\omega_1);\\
(\omega_1,\omega_1,\omega_2, \omega_1,\omega_1);\\
(\omega_1,\omega_2,\omega_1, \omega_1,\omega_1);\\
(\omega_2,\omega_1,\omega_1, \omega_1,\omega_1)
\end{gather*}
are regular in distinct  $S$-orbits.

If $b_G(S)=3$ and $(\omega_1,\omega_2,\omega_3$) is a regular point of $\Omega^3$(and also $\omega_i \ne \omega_j$ for $i \ne j,$  otherwise $b_G(S)<3$),
then it is easy to see that points
\begin{gather*}
(\omega_1,\omega_1,\omega_1, \omega_2,\omega_3);\\
(\omega_1,\omega_1,\omega_2, \omega_3,\omega_1);\\
(\omega_1,\omega_2,\omega_3, \omega_1,\omega_1);\\
(\omega_2,\omega_3,\omega_1, \omega_1,\omega_1);\\
(\omega_1,\omega_1,\omega_2, \omega_1,\omega_3)
\end{gather*}
are regular in distinct  $S$-orbits.

If $b_G(S)=4$ and $(\omega_1,\omega_2,\omega_3, \omega_4$) is a regular point of $\Omega^4$(and also  $\omega_i \ne \omega_j$ for $i \ne j,$  otherwise $b_G(S)<4$),
then it is easy to see that points
\begin{gather*}
(\omega_1,\omega_1,\omega_2, \omega_3,\omega_4);\\
(\omega_1,\omega_2,\omega_3, \omega_4,\omega_1);\\
(\omega_2,\omega_3,\omega_4, \omega_1,\omega_1);\\
(\omega_2,\omega_3,\omega_1, \omega_4,\omega_1);\\
(\omega_2,\omega_1,\omega_3, \omega_1,\omega_4)
\end{gather*}
are regular in distinct  $S$-orbits.
\end{proof}

\begin{Lemma}\label{1}
Let  $G$ be a solvable primitive subgroup of $S$, where  $S$ is isomorphic to   $S_n$ or $A_n$, $n\ge 5$. Then $b_G(S_n)\le 3.$
In particular $Reg_G(S_n,5)\ge 5.$
\end{Lemma}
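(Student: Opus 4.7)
The plan is to reduce the statement to Lemma~\ref{pr} for $n\ge11$ by sandwiching $G$ inside a maximal primitive overgroup, and to dispatch the remaining cases $5\le n\le 10$ by direct computation in GAP, using the finiteness of the list of solvable primitive subgroups in each such degree.

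First I would make the elementary observation that for $n\ge5$ the only nontrivial proper normal subgroup of $S_n$ is $A_n$, and $A_n$ itself is simple. Consequently, for any proper subgroup $M<S_n$ with $M\ne A_n$ the core $M_{S_n}$ is trivial, and for any proper subgroup $M<A_n$ the core $M_{A_n}$ is trivial. This gives the key monotonicity: whenever $G\le M$ and Lemma~\ref{pr} supplies $x_1,x_2,x_3$ with $M^{x_1}\cap M^{x_2}\cap M^{x_3}$ equal to the (trivial) core, the inclusion $G^{x_i}\le M^{x_i}$ forces $G^{x_1}\cap G^{x_2}\cap G^{x_3}=1$, hence $b_G(S_n)\le3$.

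Now assume $n\ge11$. Since $G$ is primitive and solvable, I would embed $G$ into a maximal overgroup $M$ as follows. If $G\not\le A_n$, take $M$ maximal in $S_n$ with $G\le M$; then $M$ is primitive (any block system for $M$ would be one for $G$), $M\ne S_n$, and $M\ne A_n$ because $G$ contains an odd permutation. Lemma~\ref{pr} gives $b_M(S_n)\le3$, and the monotonicity above yields $b_G(S_n)\le3$. If $G\le A_n$, take $M$ maximal in $A_n$ with $G\le M$; then $M$ is primitive, $M\ne A_n$, and Lemma~\ref{pr} (applied in the variant with ambient group $A_n$) gives $b_M(A_n)\le3$; the conjugators lie in $A_n\le S_n$, so again $b_G(S_n)\le3$. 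Note that in both branches the overgroup is automatically distinct from $A_n$, which is exactly what makes Lemma~\ref{pr} applicable and what I would emphasise, since $A_n$ (being nonsolvable for $n\ge5$) cannot coincide with a solvable $G$ anyway, but could in principle be reached as an overgroup without this caveat.

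For the range $5\le n\le10$, Lemma~\ref{pr} is not available, so I would fall back on enumeration: the solvable primitive subgroups of $S_n$ in these small degrees form a known finite list (affine-type groups such as $C_p\rtimes\mathrm{GL}_1(p)$, $\mathrm{AGL}_d(p)$, and their solvable subgroups), and the bound $b_G(S_n)\le3$ for each of them can be checked by the GAP scripts mentioned in the introduction. This verification is the only nontrivial work and, while routine, is the main obstacle in the sense that it is the step where no shortcut from Lemma~\ref{pr} is available.

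Finally, the ``in particular'' clause is immediate from Lemma~\ref{bs}: since $b_G(S_n)\le 3\le 4$, one has $\mathrm{Reg}_G(S_n,5)\ge5$.
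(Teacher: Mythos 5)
Your proposal is correct and follows essentially the same route as the paper: for $n\ge11$ embed $G$ in a maximal primitive overgroup and invoke Lemma~\ref{pr} (you are in fact slightly more careful than the paper about the $M\ne A_n$ caveat and the trivial-core monotonicity), and settle the small degrees computationally. The only difference is cosmetic: the paper first uses the fact that nontrivial solvable primitive subgroups of $S_n$ exist only for $n=p^k$, which cuts the residual cases down to $n\in\{5,7,8,9\}$, and for $n=5,7$ it identifies $G$ as $\mathbb{Z}_p\rtimes\mathbb{Z}_{p-1}$ and exhibits explicit conjugators $(1,2),(1,3)$ rather than deferring everything to GAP.
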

\begin{proof}
The group $S_n$ is known to possess a  non-trivial solvable primitive subgroup if and only if  $n =p^k$ for a prime $p$ and an integer $k$ \cite[\S4, Theorem 9]{sup}.
The proof is divided into 3 cases:

{\bf Case 1 ($n \ge 11$).}  Subgroup $G$ lies  in a maximal primitive subgroup  $M$ of $S_n$. Then by Lemma \ref{pr} we have 
$b_M(S_n)\le 3$, thus $b_G(S_n)\le 3.$ 

{\bf Case 2 ($n = p$; $p=5$, $p=7$).} 
In view of the known structure of maximal primitive solvable subgroups in  $S_n$ \cite[\S4, Theorem 10]{sup} in this case $G$ is isomorphic to 
$\mathbb{Z}_p \rtimes \mathbb{Z}_{p-1}$. Up to conjugation $G$ is a semidirect product $A \rtimes B$. Here $A=\langle(1,2, \ldots, p) \rangle$, and 
$B$ is the stabilizer of point 1 in $G$. It is easy to verify by hand or using GAP, that in this case $G\cap G^{(1,2)} \cap G^{(1,3)}=1.$	
Furthermore,  $B$ is generated by an odd permutation $b$, i.e. permutations $b(1,2)$  and $b(1,3)$ are even, thus $ G\cap G^{b(1,2)} \cap G^{b(1,3)}=1.$
So, we have $b_G(S) \le 3.$ 

{\bf Case 3 ($n = 8$, $n = 9$).} If $n=9$ then $G$ is isomorphic to $\mathbb{Z}_3^2 \rtimes GL_2(3).$ 
If $n=8$  then $G$ is isomorphic to a subgroup of  $\mathbb{Z}_2^3 \rtimes GL_3(2)$. These two cases can be verified directly by counting orbits using GAP. Note that group $ G $ contains an odd permutation, so it suffices to prove that $b_G(S_n)\le 3.$
\end{proof}

\begin{Lemma}\label{polu}
Let $G$ be a semiregular subgroup of $S_n, n\ge 5$. Then there is $x \in S_n$ such that  $G \cap G^x=1.$
\end{Lemma}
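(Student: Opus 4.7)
The plan is to split into two cases depending on whether $G$ acts transitively on $\{1,\ldots,n\}$. If $G$ has at least two orbits $O_1$ and $O_2$, then I would pick $a \in O_1$, $b \in O_2$, and take $x = (a,b)$. For any $g \in G \setminus \{1\}$, semiregularity gives $g(b) \in O_2 \setminus \{b\}$, and in particular $g(b) \notin \{a,b\}$; a one-line computation then yields $g^x(a) = g(b) \in O_2$, whereas every element of $G$ sends $a \in O_1$ into $O_1$. This forces $g^x \notin G$, so $G \cap G^x = 1$.

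If instead $G$ is transitive, then semiregularity makes $G$ regular and $n = |G|$. The key observation is that in a regular (i.e.\ free) action an element of $G$ is determined by its image at any single point; this will reduce the problem to analysing centralisers. Concretely, if $x$ has support of size $s$ with $n > 2s$, then for any $g \in G \setminus \{1\}$ there is a point $c$ with $c$ and $g(c)$ both outside $\operatorname{supp}(x)$, at which $g^x(c) = g(c)$; so $g^x \in G$ would force $g^x = g$, i.e.\ $x$ centralises $g$, and in particular $g$ preserves $\operatorname{supp}(x)$ while $g|_{\operatorname{supp}(x)}$ commutes with $x|_{\operatorname{supp}(x)}$. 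If $G$ contains any element $g_0$ of order at least $3$, I would choose an arbitrary $a$, set $b = g_0(a)$, and take the transposition $x = (a,b)$ (valid since $n \ge 5 > 4 = 2s$): a centralising $g$ would preserve $\{a,b\}$, and as fixed-point-freeness rules out fixing both points $g$ must swap $a$ and $b$; by regularity $g = g_0$, and swapping forces $g_0^2 = 1$, contradicting $|g_0| \ge 3$.

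The main obstacle is the remaining subcase, when every non-identity element of $G$ has order $2$; then $G$ is elementary abelian of order $n = 2^k \ge 8$, and no transposition succeeds, because for every $b$ the unique $g \in G$ with $g(a) = b$ is itself an involution and automatically swaps $a,b$, producing a nontrivial intersection. I would instead take the $3$-cycle $x = (a,b,c_0)$, which is legitimate because $n \ge 8 > 6 = 2s$, and apply the same reduction: a centralising $g \in G \setminus \{1\}$ would give $g|_{\{a,b,c_0\}}$ in the centraliser of a $3$-cycle inside $\operatorname{Sym}\{a,b,c_0\}$, which is the alternating group $A_3$; since $A_3$ contains no non-identity element of order dividing $2$, the restriction would be the identity, contradicting the fact that $g$ has no fixed points.
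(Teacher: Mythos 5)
Your proof is correct, and while it shares the paper's basic strategy in the regular case (conjugate by a permutation of small support and show that only centralising, fixed-point-free elements could survive in the intersection), it diverges from the paper at three points, each time in a way that makes the argument more self-contained. First, for the intransitive case the paper projects $G$ onto its orbits and solves the problem orbit-by-orbit inside each $S(\Omega_i)$, whereas your single transposition $(a,b)$ straddling two orbits kills the whole intersection at once; your version is shorter and also sidesteps the fact that an orbit of a semiregular group can have size less than $5$ (e.g.\ $|G|=2$ with several orbits), where the regular-case argument is not directly available inside one orbit. Second, in the regular non-elementary-abelian case the paper proves $g^{-1}g^{(1,2)}\notin G$ by a three-way analysis of the cycle structure of $g$; your observation that a nontrivial element of a semiregular group is determined by its value at a single point, combined with the count $|\operatorname{supp}(x)\cup g^{-1}(\operatorname{supp}(x))|\le 2s<n$, replaces that casework with the cleaner conclusion $g^x=g$, after which both arguments finish the same way (the surviving element would have to be the unique element sending $a$ to $b=g_0(a)$ and would have order $2$). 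Third, and most substantively, the paper disposes of the elementary abelian regular case by citing Zenkov's theorem, while you handle it directly with the $3$-cycle $(a,b,c_0)$, using that the centraliser of a $3$-cycle in $\mathrm{Sym}\{a,b,c_0\}$ is $A_3$ and so contains no involution; this removes the external dependency. All steps check out, in particular $n=2^k\ge 5$ forces $n\ge 8>6$, so the support-counting step remains valid for the $3$-cycle.
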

\begin{proof}
It suffices to prove the lemma for the case when the group $ G $ is regular. Indeed, let $G$ be non-regularly and $\{1,2 \ldots , n\}$ is the union of orbits
$\Omega_1, \ldots, \Omega_k$. Then $G$ acts regular on each orbit, so 
$G \le S(\Omega_1) \times \ldots \times S(\Omega_k)$, where 
$S(\Omega_i)$ is the permutation group of $\Omega_i$. Let $G_i$ be the projection of $G$ on $S(\Omega_i)$. Since $G_i$ is regular, there are 
$x_i \in S(\Omega_i)$ such that $G_i \cap G_i^{x_i}=1.$ Thus $G \cap G^{x_1 \cdot \ldots \cdot x_k}=1.$

Let $G$ be regular. If $G$ is elementary  abelian, then by \cite[Theorem 1]{zen} we obtain the required. Consider the case when $G$ is not elementary  abelian.

We show that if $g \in G$ and  $g^{-1} \cdot g^{(1,2)} \ne 1$, then $g^{-1} \cdot g^{(1,2)}$ is not contained in $G$.
Consider possibilities for the structure of the element $g$. Since $G$ is regular,  every non-trivial element of $G$  has no fixed points.  There are three cases:\\
{\bf 1. } Points $1$ and $2$ are contained in distinct independent  cycles, and there are more that two independent cycles, $g=(1,i_1, \ldots, i_{k-1}, i_k)(2,j_1, \ldots, j_l)(...).$ Then 
$g^{-1} \cdot g^{(1,2)}$ fix all points which are not contained in the same cycle as $1$ or $2$, i.e. $g^{-1} \cdot g^{(1,2)} \notin G$.\\
{\bf 2.} Let $g=(1,i_1 \ldots, i_{k-1}, i_k)(2, j_1 \ldots, j_l).$ Since $n \ge 5$, the length of at least one of these cycles is greater than two. Without loss of generality, we can assume that it is the first. Then $g^{-1} \cdot g^{(1,2)}$  fixes $i_k$, i.e. $g^{-1} \cdot g^{(1,2)} \notin G$.\\
{\bf 3.} Let $g=(1, i_1 \ldots i_k, 2, j_1 \ldots, j_{l-1}, j_l)$. Since $n \ge 5$, we have $k\ge2$ or $l\ge2$. Without loss of generality, we can assume that $l\ge2$. Then $g^{-1} \cdot g^{(1,2)}$ fixes $j_l$, i.e. $g^{-1} \cdot g^{(1,2)} \notin G$.

Therefore,  $G \cap G^{(1,2)} \subseteq \{g \in G \mid g^{(1,2)}=g\}$.
Since $G$ is regular, every non-trivial element of $G$ do not have fixed points. 
Thus, if $g^{(1,2)}=g$  then  $\{1,2\}$ is an orbit of $\langle g \rangle$ and $g^2$ fixes points $1$ and $2$, i.e. $g^2=1.$ It means that  $g=(1,2)(i_1,i_2)\ldots (i_{k-1},i_k)$, $n$ is even and $|G \cap G^{(1,2)}|=2.$ In particular, if $n$
odd then $G \cap G^{(1,2)}=1$. 

If $n$ is even then there is an element $g_1$ of order greater than two in $G$ because $G$ is not elementary abelian. Up to conjugation in $ S_n $ we can assume that $g_1$ maps the point 1 to point 2, then,
due to the regularity of $G$, $g_1$ is the only element which maps 1 to 2. Thus $G \cap G^{(1,2)}=1$, because  $(1,2)(i_1,i_2)\ldots (i_{k-1},i_k)$
is not contained in $G$.
\end{proof}

\section{Case of transitive solvable subgroup}

\begin{Lemma}\label{tr}
Let $G$ be a solvable transitive subgroup of $S$, where $S$ is isomorphic to   $S_n$ or $A_n$, $n\ge 5$. Then $Reg_G(S,5)\ge 5.$
\end{Lemma}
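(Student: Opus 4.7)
The plan is to proceed by induction on $n$ and split according to whether $G$ is primitive or imprimitive on $\{1,\ldots,n\}$. In the primitive case, Lemma~\ref{1} gives $b_G(S_n)\le 3$, so Lemma~\ref{bs} yields $Reg_G(S_n,5)\ge 5$; the case $S=A_n$ is analogous, using that the witnesses produced in the proof of Lemma~\ref{1} can be arranged to lie in $A_n$ by the parity trick already employed there.

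For the imprimitive case, fix a non-trivial block system $\mathcal{B}=\{B_1,\ldots,B_k\}$ for $G$ with $|B_i|=m$, $mk=n$, $1<m<n$, selected so that $m\ge 5$ whenever such a block system exists. Then $G$ is a solvable subgroup of the imprimitive wreath product $W=S_m\wr K\le S_n$, where $K\le S_k$ is the solvable transitive image of $G$ on $\mathcal{B}$. Since $m\ge 5$, the group $S_m$ has trivial solvable radical, and the strong inductive hypothesis (the main theorem applied at level $m<n$) gives $Reg_T(S_m,5)\ge 5$ for every solvable $T\le S_m$. Lemma~\ref{EV} applied with $S_m$ in the role of the group of trivial solvable radical and $K$ as the outer solvable group then yields $Reg_G(W,5)\ge 5$.

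To transfer this from $W$ to $S_n$, I would use the following elementary observation: for a tuple $p=(Gw_1,\ldots,Gw_5)$ with all $w_i\in W$, the condition $p\cdot s\in (W/G)^5$ for some $s\in S_n$ amounts to $w_is\in W$ for each $i$, which---since each $w_i\in W$---is equivalent to $s\in W$. Consequently the $S_n$-orbit of $p$ meets $(W/G)^5$ in exactly the $W$-orbit of $p$, and the $S_n$- and $W$-stabilizers of $p$ coincide. Hence the five regular $W$-orbits furnished above correspond to five distinct regular $S_n$-orbits on $(S_n/G)^5$, giving $Reg_G(S_n,5)\ge 5$.

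The main obstacle concerns those $G$ whose only non-trivial block systems have $m<5$, so that $S_m$ is solvable and the above application of Lemma~\ref{EV} is unavailable. This happens for every transitive imprimitive $G$ on $n\in\{6,8,9\}$ points, as well as for certain $G$ on other degrees (for instance subgroups of $S_2\wr\mathbb{Z}_5$ on $10$ points whose only block system has blocks of size $2$). These cases are to be treated either by an iterated wreath decomposition---reducing along the outer group $K$ and invoking the inductive hypothesis at level $k$---or, for genuinely small degrees, by direct GAP verification, in line with the computational component already announced for the main theorem.
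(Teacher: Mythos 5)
Your primitive case and your large-block imprimitive case follow the paper: Lemma~\ref{1} plus Lemma~\ref{bs} for primitive $G$, and induction plus Lemma~\ref{EV} when some block system has blocks of size $m\ge 5$. Your explicit justification that regular $W$-orbits on $(W/G)^5$ survive as distinct regular $S_n$-orbits on $(S_n/G)^5$ is correct and makes precise a step the paper leaves implicit. (You do omit the $S=A_n$ variant of the imprimitive case, which the paper handles by splitting according to whether the block components lie in $A_m$ or $G$ is normalized by an odd permutation; that is a repairable omission.)

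The genuine gap is the small-block case, which is the technical heart of the lemma and which you dispose of in one sentence. The transitive imprimitive groups all of whose non-trivial block systems have blocks of size at most $4$ form an \emph{infinite} family in infinitely many degrees: for every prime $p\ge 5$ take $T=\mathbb{Z}_p\rtimes\mathbb{Z}_{p-1}$ primitive on $p$ points and $G\le S_2\wr T$ transitive on $n=2p$ points; the only non-trivial block system has blocks of size $2$. So ``direct GAP verification'' cannot cover these cases. Your other suggestion, ``reducing along the outer group $K$ and invoking the inductive hypothesis at level $k$,'' is not supported by any lemma: Lemma~\ref{EV} requires the \emph{inner} group to have trivial solvable radical, which fails for $S_m$ with $m\le 4$ (indeed $S_m\wr K$ is then solvable), and conjugators drawn from the top group $S_k$ preserve the block system, hence can never trivialize the kernel of the action on blocks. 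This is exactly why the paper spends most of the proof on $k=2,3,4$: it replaces $G$ by $(S_k\times\cdots\times S_k)\rtimes L$, conjugates by powers of the $n$-cycle $(1,2,\ldots,n)$ to force any element of the intersection to stabilize several shifted partitions simultaneously, deduces that the intersection is a semiregular (in fact cyclic of order $n/k$) group, kills it with one further transposition such as $(3,4)$ or $(4,5)$, and then exhibits five explicit regular orbits. None of that machinery, nor a substitute for it, appears in your proposal, so the argument as written does not prove the lemma.
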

\begin{proof}
If $G$ is primitive then the lemma follows from Lemma  \ref{1}. Let $G$ be imprimitive.
Then  $G$ is contained in  $S_k \wr S_l$, here $n=k \cdot l,$
and $\{1,2, \ldots, n\}$ is partitioned into blocks $\Omega_1 \cup \ldots \cup \Omega_l$.
We denote
$G_i=\{g \in G \mid \Omega_ig=\Omega_i\},$ $K_i=G_i|_{\Omega_i}$.  Let $L$ be  the image of
$G$ in $S_l$. Then $G \le (K_1 \times \ldots \times K_l) \rtimes L$ and group
$(K_1 \times \ldots \times K_l) \rtimes L$ is solvable. Thus
we may assume that $G=(K_1 \times \ldots \times K_l) \rtimes L.$

{\bf Case $k \ge 5.$}

If $S=S_n$ then by induction for $K_i$ we have   $Reg_{K_i}(S_k,5)\ge 5,$ so by Lemma \ref{EV} we get $Reg_G(S_k \wr L,5)\ge 5$, whence $Reg_G(S_n,5)\ge 5$.

Assume that $S=A_n.$ If $K_i \le A_k$, then $G \le A_k \wr L$ so,  by Lemma  \ref{EV}, $Reg_G(A_n,5) \ge 5.$
If  $K_i$ is not contained in $A_k$, then there exist an odd permutation in $K_i$ and hence in $G.$
In this case $Reg_G(A_n,5) \ge 5,$ by the fact that $Reg_G(S_n,5)\ge 5$ and $G$ is normalized by an odd permutation.

Now assume that $ k $ equals $ 2$, $3 $, or $ 4 $, and $ S $ equals $ S_n.$
Since $S_n$ is solvable for $n\le4$, we may assume that
$G=(K_1 \times \ldots \times K_l) \rtimes L$, where each $K_i$ is isomorphic to $S_k.$
In particular, there is an odd permutation in group $S_n$ normalizing $G$. Hence we may assume that  $S=S_n.$
Moreover, up to conjugation in $S_n$, $K_i$ acts on $\{k\cdot (l-1)+1, \ldots, k\cdot l \}.$

{\bf Case $k=2$.}
Clearly, the intersection
$((K_1 \times \ldots \times K_l) \rtimes L) \cap ((K_1 \times \ldots \times K_l) \rtimes L)^{(1,2, \ldots, n)}$ 
stabilizes the partitions $\{1,2\}\{3,4\} \ldots \{n-1,n\}$ and $\{2,3\}\{4,5\} \ldots \{n,1\} $. If an element $g$ from the intersection
stabilizes a point (Suppose, for definiteness, that $ g $ stabilizes point 1), then $g$ fixes point 2,
because it is contained  in the same block in the first partition.
Thus, because  the element $g$ fixes point 2, it also fixes point 3, due to the fact that 2 and 3 are contained
in the same block in the second partition, etc. So if $g$ fixes the first element in a block of the first partition then it also stabilizes the second point of the block and the first point of the next block
of the same partition. Thus
$g$ stabilizes all points, hence
$g=1,$ i.e. the intersection
$$((K_1 \times \ldots \times K_l) \rtimes L) \cap ((K_1 \times \ldots \times K_l) \rtimes L)^{(1,2, \ldots, n)}$$
is semiregular
and by Lemma  \ref{polu} we have  $b_G(S_n)\le 4$. So  $Reg_G(S_n,5) \ge 5$ by Lemma \ref{bs}.

{\bf Case $k=3$.} Consider the group 
$$ ((K_1 \times \ldots \times K_l) \rtimes L) \cap ((K_1 \times \ldots \times K_l) \rtimes L)^{(1,2, \ldots, n)} 
\cap ((K_1 \times \ldots \times K_l) \rtimes L)^{(1,2, \ldots, n)^2},$$ 
which stabilizes the partitions 
\begin{gather*}
\{1,2,3\}\{4,5,6\} \ldots \{n-2,n-1,n\};\\ 
\{2,3,4\}\{5,6,7\} \ldots \{n-1,n,1\}; \\
\{3,4,5\}\{6,7,8\} \ldots \{n,1,2\}.
\end{gather*}

If an element $g$ from this group stabilizes a point (for a definiteness, let $g$ stabilizes the point 1) then the point  2 can be moved under the action of $ g $ only to  2 or 3, because of the first partition.  However, points $n$, 1 and 2 compose
a block in the third partition, therefore 2 is fixed by $g$, so  the point  3 is also fixed by $g$.
Since  points 2, 3 and 4 compose
a block in the second partition, the element $g$ has to stabilize the point 4. Thus, repeating the preceding argument, we find that
element $ g $ stabilizes all the points, i.e $g=1,$ which implies that the considered group is semiregular.

Let us find the number of orbits of the group $$R= ((K_1 \times \ldots \times K_l) \rtimes S_l) \cap ((K_1 \times \ldots \times K_l) \rtimes S_l)^{(1,2, \ldots, n)} 
\cap ((K_1 \times \ldots \times K_l) \rtimes S_l)^{(1,2, \ldots, n)^2}.$$

Note that the points 1, 2 and 3 lie in distinct orbits. Indeed, let  $g \in R$ maps 1 to 2. In this case, points of the first block of the first partition stay in this block, i.e.
 $g$ maps 3 to 3, or to 1. In the first case 2  moves to 1, because $g$ stabilizes the first partition, but then $ g $ does not stabilize the second partition. In the second case $ g $ does not stabilize the third partition. Similar arguments show that $ g $ can not map  1 to  3. Due to the symmetry of entries 1, 2 and 3 in the above considerations, these points lie in distinct orbits. Thus the number of orbits  $\ge 3$, then $|R|\le n/3.$ Note that the element  
 $(1,4, \ldots, n-2)(2,5, \ldots , n-1)(3,6, \ldots , n)$ of order $n/3$ stabilizer all three partitions, so it lies in  $R$, from which we obtain 
 $|R|=n/3,$ $ R=\langle (1,4, \ldots, n-2)(2,5, \ldots , n-1)(3,6, \ldots , n) \rangle$.
 
 We have $G \cap G^{(1,2, \ldots, n)} \cap G^{(1,2, \ldots, n)^2} \le R.$ Consider the group 
 $$ G \cap G^{(1,2, \ldots, n)} \cap G^{(1,2, \ldots, n)^2}  \cap G^{(3,4)}.$$ It have to stabilize the partition  $\{1,2,4\}\{3,5,6\} \ldots \{n-2,n-1,n\}$, which,
 as easy to see, is not stabilized by any non-trivial element of  $R$, thus  $$G \cap G^{(1,2, \ldots, n)} \cap G^{(1,2, \ldots, n)^2}  \cap G^{(3,4)}=1.$$
 Therefore, $b_G(S_n)\le4,$ and by Lemma \ref{bs} we obtain that  $Reg_G(S_n,5) \ge 5.$

{\bf Case $k=4$.} Note that the group 
$$((K_1 \times \ldots \times K_l) \rtimes L) \cap ((K_1 \times \ldots \times K_l) \rtimes L)^{(1,2, \ldots, n)^2}$$ 
stabilizes the partition $\{1,2\}\{3,4\}, \ldots , \{n-1,n\}$, which yields
by  the first case that the group 
\begin{align*}
((K_1 \times \ldots \times K_l) \rtimes L) \cap ((K_1 \times \ldots & \times K_l) \rtimes L)^{(1,2, \ldots, n)^2} \cap \\ \cap
(((K_1 \times \ldots & \times K_l) \rtimes L) \cap ((K_1 \times \ldots \times K_l) \rtimes L)^{(1,2, \ldots, n)^2})^{(1,2, 
\ldots, n )}
\end{align*}
 is semiregular.

Let us find the number of orbits of the group
\begin{align*}
R=((K_1 \times \ldots \times K_l) \rtimes S_l) \cap  ((K_1 \times \ldots & \times K_l) \rtimes S_l)^{(1,2, \ldots, n)^2} \cap \\ \cap
(((K_1 \times \ldots  \times K_l) \rtimes S_l) & \cap ((K_1 \times \ldots \times K_l) \rtimes S_l)^{(1,2, \ldots, n)^2})^{(1,2, 
\ldots, n )},
\end{align*}
which stabilizes the partitions  
\begin{gather*}
\{1,2,3,4\}\{5,6,7,8\} \ldots \{n-3,n-2,n-1,n\};\\ \{2,3,4,5\}\{6,7,8,9\} \ldots \{n-2,n-1,n,1\}; \\
\{3,4,5,6\}\{7,8,9,10\} \ldots \{n-1, n,1,2\};\\ \{4,5,6,7\}\{8,9,10,11\} \ldots \{ n,1,2,3\}.
\end{gather*}
Arguments similar to those of the preceding case ($k=3$) shows that
$$R=\langle (1,5, \ldots, n-3)(2,6, \ldots, n-2)(3,7, \ldots, n-1)(4,8, \ldots n) \rangle.$$

We have $G \cap G^{(1,2, \ldots, n)} \cap G^{(1,2, \ldots, n)^2} \cap G^{(1,2, \ldots, n)^3} \le R.$ Consider the group 
 $$ G \cap G^{(1,2, \ldots, n)} \cap G^{(1,2, \ldots, n)^2}  \cap G^{(1,2, \ldots, n)^3} \cap G^{(4,5)}.$$
 It have to stabilize the partition
 $\{1,2,3,5\}\{4,6,7,8\} \ldots \{n-3,n-2,n-1,n\}$, which is not stabilized by the group $ R $, and we obtain that 
 $$G \cap G^{(1,2, \ldots, n)} \cap G^{(1,2, \ldots, n)^2}  \cap G^{(1,2, \ldots, n)^3} \cap G^{(4,5)}=1.$$
 
 Consider the points
 \begin{gather}
 (G, Ga, G{a^2}, G{a^3}, G{(4,5)}); \\
 (G, Ga, G{a^2}, G{a^3}, G{(3,6)}); \\
 (G, Ga, G{a^2}, G{a^3}, G{(2,7)}); \\
 (G, Ga, G{a^2}, G{a^3}, G{(1,8)}); \\
 (G, Ga, G{a^2}, G{a^3}, G{(2,5)}), 
 \end{gather}
 where $a={(1,2, \ldots, n)}$. Arguments similar to those given above, show that all these points are $ G $-regular. We will show that they all lie in different orbits.
 
 Assume that $(G, Ga, G{a^2}, G{a^3}, G{(4,5)})$ and $(G, Ga, G{a^2}, G{a^3}, G{(3,6)})$ lie in the same orbit, i.e. there is
 an element $g \in S_n$, which maps $(G, Ga, G{a^2}, G{a^3}, G{(4,5)})$ to $(G, Ga, G{a^2}, G{a^3}, G{(3,6)})$, then 
 $g, g^{a^{-1}}, g^{a^{-2}}, g^{a^{-3}}, (4,5)g(3,6) \in G$.
It is clear that a trivial permutation does not map any of the points (1) -- (5) to another, i.e., we can assume that $ g \ne1. $
 Then the element  
 $g$ lies in the group
 $$G \cap G^{a} \cap G^{a^{2}} \cap G^{a^{3}}=R,$$ therefore $g=r^m$,
 where 
 \begin{align*}
 r=(1, 5, \ldots, i_m, \ldots, n-3)(2, 6, \ldots, & j_m, \ldots, n-2) \cdot \\ \cdot (3, 7, \ldots, k_m, & \ldots, n-1)(4, 8, \ldots, l_m, \ldots, n),  1\le m <n/4.
 \end{align*}
 The element $(4,5)g(3,6)$ maps 1 to $i_m$, and $4$ to $i_{m+1}$, and therefore does not stabilize the partition $\{1,2,3,4\}\{5,6,7,8\} \ldots \{n-3,n-2,n-1,n\},$
 i.e. can not lie in $ G $, so (1) and (2) lie in distinct orbits. 
The fact that (1) and (3), (1) and (5), (2) and (4), (2) and (5), (3) and (4), (3) and (5 ), (4) and (5) does not
  lie in the same orbit can be proved by similar arguments.

 Assume that $(G, Ga, G{a^2}, G{a^3}, G{(4,5)})$ and $(G, Ga, G{a^2}, G{a^3}, G{(1,8)})$ lie in the same orbit, i.e. there is
 an element $g\in S_n$, which maps $(G, Ga, G{a^2}, G{a^3}, G{(4,5)})$ to $(G, Ga, G{a^2}, G{a^3}, G{(1,8)})$. As above we obtain $g=r^m.$
 The element $(4,5)g(1,8)$ maps 1 to $i_m$. If $i_{m+1} \ne1,$ then, as above,  $(4,5)g(1,8)$ maps 4 to $i_{m+1}$
 and $(4,5)g(1,8)$ can not lie in $G$. Let $i_{m+1}=1$, then $(4,5)g(1,8)$ maps 4 to 8. Then, because  the element $(4,5)g(1,8)$ lies in $G$ and stabilize the corresponding partition, it is necessary that   $i_m=5$. Therefore we obtain that $g=r=r^{-1},$ and it is a contradiction for all cases except the case  $G\le S_4 \wr S_2$, for which the statement of the Lemma is known. The fact that (2) and (3) does not
  lie in the same orbit can be proved by similar arguments.

Therefore, $Reg_G(S_n,5)\ge5$, as wanted.
\end{proof}

\section{Proof of the main theorem}

\begin{Lemma}\label{osn}
Let $G$ be a solvable subgroup of   $S$, where $S$ is isomorphic to   $S_n$ or $A_n$, $n\ge 5$. Then $Reg_G(S,5)\ge 5.$
\end{Lemma}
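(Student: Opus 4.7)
I would prove the lemma by induction on $n$, disposing of the transitive case via Lemma \ref{tr} and handling the intransitive case by an orbit-splitting argument. Assume $G$ is intransitive on $\{1,\ldots,n\}$, and let $\Omega_1,\ldots,\Omega_k$ (with $k\ge2$) be its orbits. I would partition them into two non-empty $G$-invariant unions $\Omega$ and $\Omega'=\{1,\ldots,n\}\setminus\Omega$, and observe that $G$ embeds faithfully into $G|_\Omega\times G|_{\Omega'}$, since the only element of $G$ acting trivially on all of $\Omega\cup\Omega'$ is the identity.

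The key ingredient is the \emph{commuting permutation trick}: identify $S(\Omega)$ and $S(\Omega')$ with the subgroups of $S_n$ fixing $\Omega'$ and $\Omega$ pointwise, so that the two subgroups commute elementwise. For $x\in S(\Omega)$ and $y\in S(\Omega')$ one has $(G^{xy})|_\Omega=(G|_\Omega)^x$ and $(G^{xy})|_{\Omega'}=(G|_{\Omega'})^y$. Consequently, if $x_1,\ldots,x_t\in S(\Omega)$ satisfy $\bigcap_i(G|_\Omega)^{x_i}=1$ and $y_1,\ldots,y_t\in S(\Omega')$ satisfy $\bigcap_i(G|_{\Omega'})^{y_i}=1$, then $\bigcap_i G^{x_iy_i}$ projects trivially to both $\Omega$ and $\Omega'$ and is therefore trivial. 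This yields
\[
b_G(S_n)\le\max\bigl(b_{G|_\Omega}(S(\Omega)),\ b_{G|_{\Omega'}}(S(\Omega'))\bigr).
\]
Whenever the orbits can be grouped so that $|\Omega|,|\Omega'|\ge 5$, the inductive hypothesis applies to both projections; if the induction can be sharpened so that $b\le4$ on each piece, Lemma \ref{bs} then supplies $Reg_G(S_n,5)\ge5$ at once. For $S=A_n$ I would arrange that the chosen conjugators are even, either by selecting $x_i,y_i$ even directly or by exploiting an odd permutation normalising $G$, along the same parity lines used in Lemmas \ref{1} and \ref{tr}.

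The hardest part should be the low-rank configurations where the split is unavailable or unbalanced: $n$ small (roughly $5\le n\le 10$), or $G$ having a single dominant orbit of size $n-1$ or $n-2$ together with a few fixed points. These would be handled either by ad hoc constructions — adjoining a carefully chosen extra conjugator (for instance a transposition mixing an $\Omega$-point with an $\Omega'$-point) to destroy the remnant in the spirit of the Case $k=4$ argument of Lemma \ref{tr} — or by direct verification in GAP, consistent with the paper's announced computer-assisted methodology. A further subtlety is the regular-orbit \emph{count}: one needs five distinct $S_n$-orbits, not merely a single trivial-intersection tuple. The cleanest way around this is to push the induction to the stronger statement $b_G(S)\le4$ everywhere it applies and let Lemma \ref{bs} generate the remaining regular orbits automatically; the residual cases in which $b=5$ is forced are precisely the ones that seem to require explicit enumeration.
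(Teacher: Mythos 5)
Your first reduction (the commuting-conjugator trick for a $G$-invariant splitting $\Omega\sqcup\Omega'$ with both pieces of size $\ge 5$) is exactly what the paper does when every orbit has size at least $5$, and it is fine. But the crux of the lemma lies precisely where that trick breaks down, and your proposal leaves that part essentially unproved. If $G$ has an orbit $\Omega_i$ of size $k\in\{2,3,4\}$, the projection $G|_{\Omega_i}$ may be all of $S_k$ (the paper in fact reduces to this case, since $S_k$ is solvable for $k\le 4$), and then no collection of conjugates of $G|_{\Omega_i}$ by elements of $S(\Omega_i)$ intersects trivially: the ``core'' there is the whole of $S_k$. So your inequality $b_G(S_n)\le\max\bigl(b_{G|_\Omega}(S(\Omega)),\,b_{G|_{\Omega'}}(S(\Omega'))\bigr)$ is vacuous in exactly the hard configurations, and these are not a fringe of small-$n$ residual cases: they occur for every $n$ (e.g.\ $G\le S_2\times S_{n-2}$ with full projection on the first factor). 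You do mention ``a transposition mixing an $\Omega$-point with an $\Omega'$-point'' as an ad hoc fix, which is indeed the right germ of an idea, but the paper makes this the engine of the entire argument: an induction on the \emph{number of orbits} whose step removes one orbit of size $k=2,3,4$ by conjugating with $(2,3)$, respectively $(1,4)$ and $(1,4)(2,5)$, respectively $(1,5)$, $(1,5)(2,6)$, $(1,5)(2,6)(3,7)$, and verifying by a pointwise stabilization argument that the resulting intersection collapses into $K=G_1^{x}\cap G_1^{y}\cap G_1^{z}\cap G_1^{t}$ on the large orbit. Nothing in your sketch supplies these conjugators or the verification, and GAP cannot cover infinitely many $n$.

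The second gap is the regular-orbit count. Your proposed remedy --- strengthen the induction to $b_G(S)\le 4$ and invoke Lemma \ref{bs} --- is not available: the paper does not prove $b_G\le 4$ in the intransitive case (nor in the $k=4$ subcase of Lemma \ref{tr}), and the whole reason the problem is stated with five conjugates is that four need not suffice. What the paper actually does is transfer the count: it takes five pairwise inequivalent regular tuples $(G_1,G_1x,G_1y,G_1z,G_1t)$ for the projection $G_1$ on the large orbit (available by induction), builds from each a regular tuple for $G$ using the mixing transpositions, and then shows that any $g\in S_n$ carrying one such tuple to another must lie in $(S_k\times S_{n-k})\cap(S_k\times S_{n-k})^{(1,k+2)}\cap\cdots$, hence in $S_{n-2k+1}$ acting on the large orbit, contradicting the inequivalence of the original tuples. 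This descent of ``distinct orbits upstairs from distinct orbits downstairs'' is a necessary and nontrivial step that your proposal replaces with a hope. Finally, the genuinely computational base cases are not ``$5\le n\le 10$'' but the situations where \emph{all} orbits have size in $\{2,3,4\}$ (e.g.\ $S_2\times S_3$, $S_3\times S_4$, $S_2\times S_2\times S_2$, \dots); orbits of size $1$ are harmless, contrary to your identification of the hard cases.
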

\begin{proof}
If $G$ is transitive then the lemma follows from Lemma  \ref{tr}. Let $G$ be intransitive, i.e.
$G\le S_{k_1} \times \ldots \times S_{k_l}.$ If  $k_i \ge 5$ for all $i$ then by Lemma \ref{tr} there are
$x_i, y_i, z_i, t_i$, such that $G_i \cap G_i^{x_i} \cap G_i^{y_i} \cap G_i^{z_i} \cap G_i^{t_i}=1$, here $G_i$~is
the projection of the group
$G$ on $S_i,$
hence $$G \cap G^{x_1 \cdot \ldots \cdot x_l} \cap G^{y_1 \cdot \ldots \cdot y_l} \cap G^{z_1 \cdot \ldots \cdot z_l} \cap G^{t_1 \cdot \ldots \cdot t_l}=1,$$
thus $Reg_G(S,5) \ge 5.$

Further proof is by induction on the number of orbits. The base of induction is the cases when there are only the orbits of sizes  2, 3 and 4.
It suffices to consider the cases $S_2 \times S_3$, $S_2 \times S_4$, $S_3 \times S_4$, $S_2 \times S_2 \times S_2$, $S_3 \times S_3$, $S_4 \times S_4$. 
The validity of the statement  $ Reg_G (S, 5) \ge5 $ for these groups can be verified directly with the help of GAP.

Assume that $G\le S_k \times S_{n-k}$, $k=2,3,4; n-k \ge 5.$ Now $S_k$, up to conjugation in $S_n$, acts on  $\{1, \ldots, k\}$, while $S_{n-k}$ acts on $\{k+1, \ldots, n\}.$ Let $G_1$ be the  projection of $G$ on $S_{n-k}$.
As earlier we may assume that the projection of $G$ on $S_k$ equals  $S_k,$ since  $S_k$ is solvable for $k\le 4$.
By induction,  there exist $x,y,z,t \in S_{n-k}$ such that $$G_1 \cap G_1^{x} \cap G_1^{y} \cap G_1^{z} \cap G_1^{t}=1.$$
Denote $K=G_1^{x} \cap G_1^{y} \cap G_1^{z} \cap G_1^{t}$.

{\bf Case $k=2$.} Consider the intersection $S_2 \times G_1 \cap (S_2 \times K)^{(2,3)}$, and let  $g$ lies in the intersection.
Since $g$ lies in $S_2 \times G_1$,  it cannot map   $3$  to $1$ or $2$. On the other hand, as an element of 
$(S_2 \times K)^{(2,3)}$,  $g$ can map   $3$ only to  $1$. So $g$ fixes $3$. Thus it fixes $1$ and hence
it fixes $2$, i.e. $g \in K$.
We obtain that $g$ is contained in
$G_1 \cap K=1.$

Consider elements $x_1, y_1, z_1, t_1 \in S_{n-k}$ such that $G_1 \cap G_1^{x_1} \cap G_1^{y_1} \cap G_1^{z_1} \cap G_1^{t_1}=1$  and points
$(G_1,G_1x,G_1y,G_1z,G_1t)$ and $(G_1,G_1x_1,G_1y_1,G_1z_1,G_1t_1)$ lie in distinct  $S_{n-k}$-orbits. Then,
as we prove above,
\begin{align*}
S_2 \times G_1 \cap (S_2 \times G_1^{x_1})^{(2,3)} \cap & (S_2 \times G_1^{y_1})^{(2,3)} \cap \\ \cap & (S_2 \times  G_1^{z_1})^{(2,3)} \cap (S_2 \times G_1^{t_1})^{(2,3)}=1,
 \end{align*}
and points
\begin{gather*}
(G, Gx(2,3), G{y}{(2,3)}, G{z}{(2,3)}, G{t}(2,3));\\
 (G, Gx_1(2,3), G{y_1}{(2,3)}, G{z_1}{(2,3)}, G{t_1}(2,3))
 \end{gather*}
lie in the same $S_n$-orbit
if and only if there is  $g\in S_n$ such that
$$g,xg^{(2,3)}x_1^{-1}, yg^{(2,3)}y_1^{-1}, zg^{(2,3)}z_1^{-1}, tg^{(2,3)}t_1^{-1} \in G \le S_k \times S_{n-k}.$$
Suppose that such $g$ exists. Then since  $x,y,z,t,x_1,y_1,z_1,t_1 \in S_{n-k}$, we obtain that
$$g \in (S_k \times S_{n-k}) \cap (S_k \times S_{n-k})^{(2,3)}.$$
As shown above, in this case $g \in S_{n-3}$, in particular $g$ map  $(G_1,G_1x,G_1y,G_1z,G_t)$ to $(G_1,G_1x_1,G_1y_1,G_1z_1,G_1t_1)$, but this is impossible, since the points are from distinct orbits.
Hence $Reg_G(S_n,5) \ge Reg_{G_1}(S_{n-k},5)\ge 5.$

{\bf Case $k=3$.} Consider the group
$$S_3 \times G_1 \cap (S_3 \times G_1^{x})^{(1,4)} \cap (S_3 \times G_1^{y})^{(1,4)(2,5)} \cap S_3 \times G_1^{z} \cap S_3 \times G_1^{t}.$$
Let  $g$ be an element of this group.
Then $g$ stabilizes the point 3, because $G$ lies in $S_3 \times G_1 \cap (S_3 \times G_1^{x})^{(1,4)} \cap (S_3 \times G_1^{y})^{(1,4)(2,5)}$. Since
$g \in S_3 \times G_1 $, $g$ can map 2 only to 2 or 1, but $g \in  (S_3 \times G_1^{x})^{(1,4)} $ so   2 can not be moved to 1, hence   $g$ stabilizes 2, thus $g$ also stabilizes 1 and 4. Now since $g \in (S_3 \times G_1^{y})^{(1,4)(2,5)}$, the element $g$ stabilizes  5, and thus lies in $K$. Therefore
$g=1.$

Consider the elements $x_1, y_1, z_1, t_1 \in S_{n-k}$ such that  $G_1 \cap G_1^{x_1} \cap G_1^{y_1} \cap G_1^{z_1} \cap G_1^{t_1}=1$  and points
$(G_1,G_1x,G_1y,G_1z,G_1t)$ и $(G_1,G_1x_1,G_1y_1,G_1z_1,G_1t_1)$ lie in distinct  $S_{n-k}$-orbits. Then, by what was proved above we obtain
\begin{align*}
S_3 \times G_1 \cap (S_4 \times G_1^{x_1})^{(1,4)} \cap & (S_3 \times G_1^{y_1})^{(1,4)(2,5)} \cap \\ \cap & S_3 \times  G_1^{z_1} \cap S_4 \times G_1^{t_1}=1,
 \end{align*}
and points
\begin{gather*}
(G, Gx(1,4), G{y}{(1,4)(2,5)}, G{z}, G{t});\\
 (G, Gx_1(1,4), G{y_1}{(1,4)(2,5)}, G{z_1}, G{t_1})
 \end{gather*}
lie in the same $S_n$-orbit 
if and only if there is $g\in S_n$ such that
$$g,xg^{(1,4)}x_1^{-1}, yg^{(1,4)(2,5)}y_1^{-1}, zgz_1^{-1}, tgt_1^{-1} \in G \le S_k \times S_{n-k}.$$
Suppose that such $g$ exists then we obtain 
$$g \in (S_k \times S_{n-k}) \cap (S_k \times S_{n-k})^{(1,4)} \cap (S_k \times S_{n-k})^{(1,4)(2,5)},$$
since $x,y,z,t,x_1,y_1,z_1,t_1 \in S_{n-k}$.
As it is shown above, in this case $g \in S_{n-5}$, then $g$ maps $(G_1,G_1x,G_1y,G_1z,G_1t)$ to $(G_1,G_1x_1,G_1y_1,G_1z_1,G_1t_1)$, which contradicts the assumption, since the points are from distinct orbits.
Thus $Reg_G(S_n,5) \ge Reg_{G_1}(S_{n-k},5)\ge 5.$

{\bf Case $k=4$.} Consider the group
$$S_4 \times G_1 \cap (S_4 \times G_1^{x})^{(1,5)} \cap (S_4 \times G_1^{y})^{(1,5)(2,6)} \cap (S_4 \times G_1^{z})^{(1,5)(2,6)(3,7)} \cap S_4 \times G_1^{t},$$
Let  $g$ be an element of this group. As easy to see, $g$ stabilize the point 4. Since $g \in S_4 \times G_1 \cap (S_4 \times G_1^{y})^{(1,5)(2,6)}$, it stabilize the point 3. Hence, because  $g \in S_4 \times G_1 \cap (S_4 \times G_1^{x})^{(1,5)}$, we obtain that  2 and 5 also fixed by $g$. Thus $g$ also fixes 1 and 6 since $g \in S_4 \times G_1 \cap (S_4 \times G_1^{y})^{(1,5)(2,6)}$. Therefore $g$ also
stabilizes 7. Thus $g \in K$, i.e. $g=1.$

Consider the elements $x_1, y_1, z_1, t_1 \in S_{n-k}$ such that $G_1 \cap G_1^{x_1} \cap G_1^{y_1} \cap G_1^{z_1} \cap G_1^{t_1}=1$  and points
$(G_1,G_1x,G_1y,G_1z,G_1t)$ and $(G_1,G_1x_1,G_1y_1,G_1z_1,G_1t_1)$ lie in distinct $S_{n-k}$-orbits. Then, by what was proved above, we obtain that
\begin{align*}
S_4 \times G_1 \cap (S_4 \times G_1^{x_1})^{(1,5)} \cap & (S_4 \times G_1^{y_1})^{(1,5)(2,6)} \cap \\ \cap & (S_4 \times  G_1^{z_1})^{(1,5)(2,6)(3,7)} \cap S_4 \times G_1^{t_1}=1,
 \end{align*}
and points
\begin{gather*}
(G, Gx(1,5), G{y}{(1,5)(2,6)}, G{z}{(1,5)(2,6)(3,7)}, G{t});\\
 (G, Gx_1(1,5), G{y_1}{(1,5)(2,6)}, G{z_1}{(1,5)(2,6)(3,7)}, G{t_1})
 \end{gather*}
lie in the same $S_n$-orbit 
if and only if there is $g\in S_n$ such that
$$g,xg^{(1,5)}x_1^{-1}, yg^{(1,5)(2,6)}y_1^{-1}, zg^{(1,5)(2,6)(3,7)}z_1^{-1}, tgt_1^{-1} \in G \le S_k \times S_{n-k}.$$
Suppose that such $g$ exists then we obtain that
$$g \in (S_k \times S_{n-k}) \cap (S_k \times S_{n-k})^{(1,5)} \cap (S_k \times S_{n-k})^{(1,5)(2,6)} \cap (S_k \times S_{n-k})^{(1,5)(2,6)(3,7)}$$
 since $x,y,z,t,x_1,y_1,z_1,t_1 \in S_{n-k}$.
As it is shown above, in this case $g \in S_{n-7}$,  then $g$ maps $(G_1,G_1x,G_1y,G_1z,G_1t)$ to $(G_1,G_1x_1,G_1y_1,G_1z_1,G_1t_1)$, which contradicts the assumption since the points are from distinct orbits.
Thus $Reg_G(S_n,5) \ge Reg_{G_1}(S_{n-k},5)\ge 5.$

Note that in these cases the group $G$ is normalized by $(1,2)$,
from this property and the fact that  $Reg_G(S_n,5)\ge5$ we obtain that
$Reg_G(A_n,5)\ge5$.

\end{proof}

The main theorem for the case of an almost simple group $S$ with socle isomorphic to $A_n$, $n \ge 5$, $n\ne 6$ follows from Lemma \ref{osn}. 
The main theorem for the case of an almost simple group $S$ with socle isomorphic to $A_6 \cong PSL_2(9)$ follows from \cite[Theorem 1.1]{bur}, if subgroup  $G$ does not lie in Aschbacher class 
 $C_1$,and from \cite[Lemma 8]{vd2}, if subgroup  $G$ lies in Aschbacher class  $C_1$.

\end{document}